\newtheorem{question}{Question}[section]
\newtheorem{theorem}[question]{Theorem}
\newtheorem{lemma}[question]{Lemma}
\newtheorem{corollary}[question]{Corollary}
\title{Increasing chains and discrete reflection of cardinality}
\author{
Santi Spadaro}
\address{Institute of Mathematics, Silesian University in Opava, Na Rybn\' i\v cku 626/1, 746 01 Opava, Czech Republic}
\email{santispadaro@yahoo.com}
\thanks{Research partially supported by an INdAM-Cofund fellowship}
\subjclass[2000]{54A25}
\keywords{discrete set, free sequence, elementary submodel, strongly discretely Lindel\"of, Arhangel'skii Theorem}
\begin{document}
\baselineskip.525cm

\begin{abstract}
Combining ideas from two of our previous papers (\cite{S1} and \cite{S2}), we refine Arhangel'skii Theorem by proving a cardinal inequality of which this is a special case: any increasing union of strongly discretely Lindel\"of spaces with countable free sequences and countable pseudocharacter has cardinality at most continuum. We then give a partial positive answer to a problem of Alan Dow on reflection of cardinality by closures of discrete sets.
\end{abstract}

\maketitle

\section{Introduction and notation}

All spaces are assumed to be Hausdorff. A set is discrete if each one of its points is isolated in the relative topology. While structurally very simple, discrete sets play an important role in Set-theoretic Topology. For example, by an old result of De Groot, the cardinality of every topological space where discrete sets are countable cannot exceed $2^\mathfrak{c}$, where $\mathfrak{c}$ denotes the cardinality of the continuum. 

If discrete sets have a strong influence on cardinal properties of topological spaces, their closure are often true \emph{mirrors} of global properties of a topological space (see \cite{ATW} and \cite{A}). A classical result of Arhangel'skii says that a topological space $X$ is compact if and only if the closures of its discrete sets are compact. Whether this remains true when compactness is replaced by the Lindel\"of property is a well-known open question of Arhangel'skii. Partial answers to this question have been provided in \cite{AB} and \cite{PT}.

Another well-studied open problem, also due to Arhangel'skii, is whether closures of discrete sets reflect cardinality in compact spaces. More precisely, Arhangel'skii asked whether $|\overline{D}|=|X|$ for every compact space $X$ and discrete set $D \subset X$. Dow provided consistent counterexamples to this question in \cite{D2}, while Efimov \cite{Ef} proved that compact dyadic spaces reflect cardinality. In answer to a question of Alan Dow, Juh\'asz and Szentmikl\'ossy \cite{JS} proved that under a slight weakening of the GCH, compact spaces of countable tightness also reflect cardinality.

Aurichi noted in \cite{A}, that if $X$ is an $L$-space, left separated in order type $\omega_1$, then $|\overline{D}| <|X|$, for every discrete set $D \subset X$, so, by Justin Moore's construction of a ZFC $L$-space, there are non-discretely reflexive Tychonoff spaces in ZFC. But as far as we know, the ZFC existence of a non-discretely reflexive compact space is still open.

Arhangel'skii's question continues to inspire attempts at partial positive solutions. In particular, the following question of Alan Dow is still open.

\begin{question}
(\cite{D2}) Is $g(X)=|X|$ for every compact separable space $X$?
\end{question}

Where $g(X)$ is defined as the supremum of the cardinalities of the closures of discrete sets in $X$.  We will provide a partial positive answer to the above question in the final part of our paper.

One of the most central results in the theory of cardinal invariants is Arhangel'skii's Theorem, which solved a 50 year old question of Alexandroff (see \cite{H} for a survey).

\begin{theorem}
Let $X$ be a Lindel\"of first-countable space. Then $|X| \leq \mathfrak{c}$.
\end{theorem}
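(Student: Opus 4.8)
The plan is to use the elementary submodel technique, which dovetails with the tools developed later in the paper. Fix a regular cardinal $\theta$ large enough that $X$ and its topology $\tau$ belong to $H(\theta)$, and build an elementary submodel $M \prec H(\theta)$ with $X, \tau \in M$, $|M| = \mathfrak{c}$, and $M$ closed under countable sequences, i.e.\ ${}^{\omega}M \subseteq M$. Such an $M$ exists because $\mathfrak{c}^{\omega} = \mathfrak{c}$: one constructs an increasing continuous chain $\langle M_\alpha : \alpha < \omega_1\rangle$ of submodels of size $\mathfrak{c}$, arranging at successor stages that every countable sequence of elements of $M_\alpha$ lies in $M_{\alpha+1}$, and sets $M = \bigcup_{\alpha<\omega_1} M_\alpha$. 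The goal is to prove $X \subseteq M$, for then $|X| \le |M| = \mathfrak{c}$.

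First I would show that $Y := X \cap M$ is closed in $X$. Since $X$ is first-countable, closedness coincides with sequential closedness, so let $\langle y_n : n \in \omega\rangle$ be a sequence in $Y$ converging to some $p \in X$. The function $n \mapsto y_n$ is a countable sequence of elements of $M$, hence belongs to $M$ by ${}^{\omega}M \subseteq M$; as $X$ is Hausdorff the point $p$ is the \emph{unique} limit of this sequence, so $p$ is the unique witness in $H(\theta)$ of a formula with a parameter in $M$, and elementarity forces $p \in M$. Thus $p \in X \cap M = Y$, and $Y$ is closed.

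Next I would show $Y = X$. Suppose not and fix $p \in X \setminus Y$. For each $x \in Y$ we have $x \in M$, so by elementarity $M$ contains a countable local base at $x$; being countable and an element of $M$, that base is a subset of $M$, and since $X$ is Hausdorff we may select a basic open set $U_x \in M$ with $x \in U_x$ and $p \notin U_x$. The family $\{U_x : x \in Y\}$ covers the closed, hence Lindel\"of, subspace $Y$, so it admits a countable subcover $\mathcal{V} = \{U_{x_n} : n \in \omega\}$. As $\mathcal{V}$ is enumerated by a countable sequence of elements of $M$, closure under countable sequences gives $\mathcal{V} \in M$. Now $X \setminus \bigcup \mathcal{V}$ is nonempty, since it contains $p$, and is definable from the parameter $\mathcal{V} \in M$, so by elementarity it meets $M$: there is $q \in (X \setminus \bigcup \mathcal{V}) \cap M$. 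But then $q \in X \cap M = Y \subseteq \bigcup \mathcal{V}$, a contradiction. Hence $Y = X$ and $|X| \le |M| = \mathfrak{c}$.

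The routine ingredients are the existence of $M$ and the bookkeeping behind the definability-plus-elementarity inferences. The step I expect to demand the most care is the covering argument, where the Lindel\"of property (to extract a countable subcover), the closure of $M$ under countable sequences (to pull that subcover back into $M$), and elementarity (to locate a witness $q$ inside $M$) must be orchestrated precisely; a sloppy version fails exactly because $\mathcal{V}$ must genuinely be an \emph{element} of $M$. An alternative route, nearer the free-sequence theme of the paper, would pass through \v{S}apirovski\u{\i}'s bound $|X| \le 2^{L(X)\cdot t(X)\cdot \psi(X)}$ together with the estimate $F(X) \le L(X)\cdot t(X) = \omega$ on the length of free sequences; but the submodel argument above is more self-contained and transfers directly to the refinements pursued in the sequel.
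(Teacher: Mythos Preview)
Your argument is correct and complete. Note, however, that the paper does not actually give its own proof of Arhangel'skii's theorem: it is quoted in the introduction as motivation, and the paper's contribution is the refinement $|X|\le 2^{FL(X)\cdot F(X)\cdot\psi(X)}$ (Theorem~\ref{basethm}) together with its chain version. To recover Arhangel'skii's theorem from that machinery one invokes the standard estimate $F(X)\le L(X)\cdot t(X)$, so that first countability and Lindel\"ofness yield $FL(X)\cdot F(X)\cdot\psi(X)\le\omega$; the proof of Theorem~\ref{basethm} then runs a free-sequence construction inside a $\kappa$-closed elementary submodel to force $X\subset M$.

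Your route is genuinely different from that one. You exploit first countability \emph{directly} in two places---once to identify closedness of $X\cap M$ with sequential closedness, and once to pull a countable local base into $M$---and you never touch free sequences at all. This is the classical Dow-style submodel proof; it is shorter and more transparent in the first-countable case, and your identification of the delicate step (getting the countable subcover $\mathcal{V}$ to be an \emph{element} of $M$) is exactly right. What the free-sequence approach of Theorem~\ref{basethm} buys, by contrast, is that it survives when first countability is weakened to $F(X)\cdot\psi(X)\le\kappa$, where the sequential-closure trick is no longer available. You already note this trade-off in your closing remark.
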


Arhangel'skii's original proof of his theorem made use of a particularly strong kind of discrete set called \emph{free sequence}. A set $\{x_\alpha: \alpha < \kappa \}$ is called a \emph{free sequence} if for every $\beta < \kappa$ we have $\overline{\{x_\alpha: \alpha < \beta\}} \cap \overline{\{x_\alpha: \alpha \geq \beta \}}=\emptyset$. In \cite{S2} we showed how the supremum of the sizes of free sequences in the space $X$ ($F(X)$) could replace the tightness in a generalization of the Arhangel'skii Theorem due to Juh\'asz. With some additional help from the technique of elementary submodels, this resulted in a considerably shorter proof of Juh\'asz's Theorem.

\begin{theorem} \label{arhfree}
(\cite{S2}) Let $\{X_\alpha: \alpha < \lambda \}$ be an increasing chain of topological spaces such that $F(X_\alpha) \cdot L(X_\alpha) \cdot \psi(X_\alpha) \leq \kappa$, for every $\alpha < \lambda$. Then $|\bigcup_{\alpha < \lambda} X_\alpha| \leq 2^{\kappa}$.
\end{theorem}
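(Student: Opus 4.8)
The plan is to run an elementary submodel argument in the spirit of Dow's proof of Arhangel'skii's Theorem, proving the stronger statement that $\bigcup_{\alpha<\lambda}X_\alpha\subseteq M$ for a suitable $M$. Fix a large regular $\theta$ and build $M\prec H(\theta)$ containing the sequence $\langle X_\alpha:\alpha<\lambda\rangle$ and $\kappa$, with $\kappa+1\subseteq M$, $[M]^{\le\kappa}\subseteq M$ and $|M|=2^\kappa$; such an $M$ exists because $(2^\kappa)^\kappa=2^\kappa$, so one takes the union of a $\subseteq$-increasing, $\kappa^+$-continuous chain of submodels of size $2^\kappa$ that is closed under $\kappa$-sized subsets at successor steps. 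Since $|M|=2^\kappa$, it suffices to prove $\bigcup_\alpha X_\alpha\subseteq M$. A useful preliminary observation is that, because the least level of a point is definable from that point and the chain, every element of $\big(\bigcup_\alpha X_\alpha\big)\cap M$ already appears at a level in $\lambda\cap M$; hence $\big(\bigcup_\alpha X_\alpha\big)\cap M=\bigcup_{\alpha\in\lambda\cap M}X_\alpha$.

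I would argue by induction on $\lambda$ and first dispose of two reductions. If $\operatorname{cf}(\lambda)\le\kappa$, then choosing a cofinal set of indices of size $\le\kappa$ exhibits the union as a union of $\le\kappa$ subspaces, each of which is itself an increasing union of a shorter chain and hence has cardinality $\le 2^\kappa$ by the inductive hypothesis; summing gives $|\bigcup_\alpha X_\alpha|\le\kappa\cdot 2^\kappa=2^\kappa$. So I may assume $\lambda$ is regular and $>\kappa$. If in addition $\lambda\le 2^\kappa$, I would arrange $\lambda\subseteq M$ from the start; then every $X_\alpha\in M$, and the single-space part below yields $X_\alpha\subseteq M$ for each $\alpha$, whence $\bigcup_\alpha X_\alpha\subseteq M$. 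This leaves the genuinely hard case $\lambda>2^\kappa$, where $\delta:=\sup(\lambda\cap M)<\lambda$ and, using $[M]^{\le\kappa}\subseteq M$, $\operatorname{cf}(\delta)\ge\kappa^+$.

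The engine underlying everything is the single-space case: if $Y\in M$ with $F(Y)\cdot L(Y)\cdot\psi(Y)\le\kappa$, then $Y\subseteq M$, which I would get by showing $Y\cap M$ is both dense and closed in $Y$. Density is the routine half: if $p\in Y\setminus\overline{Y\cap M}$, then $\psi(Y)\le\kappa$ lets me cover $Y\cap M$ by neighborhoods whose closures avoid $p$, $L(Y)\le\kappa$ extracts a subcover of size $\le\kappa$ (which lies in $M$ by $[M]^{\le\kappa}\subseteq M$), and reflection produces a point of $Y\cap M$ outside all of them, a contradiction. Closedness is where the free sequence number enters: from a point $p\in\overline{Y\cap M}\setminus(Y\cap M)$ I would recursively build, using $\psi$ to separate and using $p$ as an anchor that is never captured by any closed initial segment coming from $M$, a free sequence $\langle x_\xi:\xi<\kappa^+\rangle$ inside $Y\cap M$; since a free sequence cannot have length exceeding $F(Y)\le\kappa$, this is absurd. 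This is precisely where $F$ replaces tightness in Juh\'asz's argument.

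For the hard case, assuming toward a contradiction that some $p\in\bigcup_\alpha X_\alpha\setminus M$ exists, I would run the same recursion inside the reflected part $\bigcup_{\alpha<\delta}X_\alpha=\big(\bigcup_\alpha X_\alpha\big)\cap M$, producing a free sequence $\langle x_\xi:\xi<\kappa^+\rangle$ with each $x_\xi\in X_{\beta_\xi}$ for some $\beta_\xi<\delta$. The crux, and the step I expect to be the main obstacle, is to force the levels $\beta_\xi$ to stay \emph{bounded} below $\delta$: a free sequence of length $\kappa^+$ contained in a single $X_\beta$ is a free sequence of $X_\beta$ and contradicts $F(X_\beta)\le\kappa$, whereas if the $\beta_\xi$ were cofinal in $\delta$ no single level would be contradicted. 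With tightness this localization is automatic, since a point in a closure already lies in the closure of a $\le\kappa$-sized set, which meets only boundedly many levels once $\operatorname{cf}(\delta)>\kappa$; with only $F$ at hand I must instead interleave the choice of the $x_\xi$ with control of the levels, exploiting the regularity of $\operatorname{cf}(\delta)$ and, in the extremal situation $\operatorname{cf}(\delta)=\kappa^+$, a pressing-down argument on the level function to trap the construction inside a fixed $X_\beta$ (when $2^\kappa>\kappa^+$ one can even choose $M$ so that $\operatorname{cf}(\delta)>\kappa^+$ and avoid the extremal case). Securing this localization is the heart of the proof; once it is in place, the contradiction with $F(X_\beta)\le\kappa$ yields $\bigcup_\alpha X_\alpha\subseteq M$ and hence $|\bigcup_\alpha X_\alpha|\le 2^\kappa$.
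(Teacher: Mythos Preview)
You have misdiagnosed where the difficulty lies. The localization you flag as ``the heart of the proof'' is in fact immediate once the setup is done correctly, and the step you call ``routine'' hides the real work.

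First, the paper (which does not prove Theorem~\ref{arhfree} directly but proves the stronger Theorem~\ref{firstmainthm} by the same method) simply reduces to $\lambda=(2^\kappa)^+$: if $\lambda\le 2^\kappa$ one sums the single-space bound over the chain, and if $\lambda>2^\kappa$ any putative $(2^\kappa)^+$ points already live in a subchain of length $(2^\kappa)^+$. With $\lambda=(2^\kappa)^+$ regular, \emph{any} set of size $\le 2^\kappa$ --- in particular your free sequence of length $\kappa^+\le 2^\kappa$ --- is automatically contained in a single $X_\theta$. Your own observation that $X\cap M=\bigcup_{\alpha\in\lambda\cap M}X_\alpha\subseteq X_\delta$ (with $\delta=\sup(\lambda\cap M)<\lambda$) already says this: the free sequence is built inside $X\cap M\subseteq X_\delta$, hence is a free sequence in $X_\delta$, contradicting $F(X_\delta)\le\kappa$. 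No pressing-down is needed, and your suggested Fodor argument would not work anyway, since the level map $\xi\mapsto\beta_\xi$ bears no regressive relation to $\xi$.

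The genuine gap is earlier, in running the recursion. At stage $\beta$ you must cover $\overline{\{x_\alpha:\alpha<\beta\}}^{X}$ by $\le\kappa$ members of $\mathcal U=\{U\in\tau\cap M:p\notin U\}$. For that you need two things: that $\mathcal U$ actually covers this closure, and that the closure has Lindel\"of number $\le\kappa$. Neither is automatic, because the closure is taken in $X$, where no global bound on $L$ or $\psi$ is assumed. The paper's device (Claim~1 in the proof of Theorem~\ref{firstmainthm}, and the analogous paragraph in the proof of Theorem~\ref{basethm}) is to show that for every $C\in[X\cap M]^{\le\kappa}$ one has $\overline{C}^{X}\subseteq X\cap M$; then the closure lies inside some $X_\theta$, is closed there, inherits $L\le\kappa$, and is covered by $\mathcal U$ via the $\psi$-argument applied pointwise in $M$. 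Your dense/closed decomposition does not supply this: your ``density'' sketch invokes $L(Y)\le\kappa$ to extract a subcover of a cover of $Y\cap M$, which $L(Y)$ does not give, and your ``closedness'' sketch presupposes exactly the control over closures that still has to be proved.
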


Given a topological space $(X,\tau)$, $L(X)$ (the \emph{Lindel\"of number} of $X$) is the minimum cardinal $\kappa$ such that every cover of $X$ has a subcover of cardinality $\kappa$ and $\psi(X)$ (the \emph{pseudocharacter} of $X$) is defined as follows $\psi(X)=\sup \{\psi(x,X): x \in X \}$, and $\psi(x,X)=\min \{\kappa: (\exists \mathcal{U} \in [\tau]^\kappa)(\bigcap \mathcal{U}= \{x\}) \}$. 

The above theorem has been generalized by various authors, especially with the aim of improving it in a non-regular setting and to provide bounds for the cardinality of power-homogeneous spaces (see, for example, \cite{B1}, \cite{B2} and \cite{CCP} and \cite{CPR}). Here we present a new refinement in a completely different direction.  Putting together ideas from \cite{S1} and \cite{S2} we are able to replace the Lindel\"of number with its supremum on closures of free sequences $(FL(X))$ in Theorem $\ref{arhfree}$. As a byproduct we obtain that the cardinality of the union of an increasing chain of a strongly discretely Lindel\"of spaces of countable pseudocharacter with countable free sequences does not exceed the continuum. Although in \cite{AB}, Arhangel'skii and Buzyakova proved that $L(X) \leq F(X) \cdot FL(X)$ for every Tychonoff space $X$, their proof uses the Tychonoff separation axiom in an essential way (they consider a compactification of $X$), while we are only assuming $X$ to be Hausdorff. Notation and terminology follow \cite{E} for Topology and \cite{K} for Set Theory. 

Kunen's book \cite{K} contains a good introduction on elementary submodels submodel. Dow's article \cite{D} is the most comprehensive survey on applications of elementary submodels to Topology. Other good introductions to this last topic are \cite{FW} \cite{G}, \cite{H} and \cite{W}.

\section{Closures of discrete sets and increasing chains}

The proof of Theorem $\ref{basethm}$ does not present significant changes from that of the case $\lambda=1$ in Theorem $\ref{arhfree}$, as presented, for example, in \cite{Sd}. We nevertheless include it, for the reader's convenience. 

\begin{theorem} \label{basethm}
(Juh\'asz, essentially) Let $(X, \tau)$ be a space. Then $|X| \leq 2^{FL(X) \cdot \psi(X) \cdot F(X)}$
\end{theorem}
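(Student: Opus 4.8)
The plan is to transplant Arhangel'skii's free-sequence argument into a suitable elementary submodel, invoking the bound $FL(X)$ only at the places where closures of free sequences are actually met. Write $\kappa = FL(X)\cdot\psi(X)\cdot F(X)$ and fix a regular $\theta$ large enough that $H(\theta)$ contains $X$, $\tau$, and witnesses for all the statements below. First I would build $M\prec H(\theta)$ with $\{X,\tau\}\subseteq M$, $\kappa+1\subseteq M$, $[M]^{\le\kappa}\subseteq M$, and $|M|=2^{\kappa}$; such an $M$ is obtained as the union of a continuous increasing chain $\langle M_\xi:\xi<\kappa^+\rangle$ of elementary submodels of size $2^{\kappa}$ with $[M_\xi]^{\le\kappa}\subseteq M_{\xi+1}$, the $\kappa$-closure of the union following from $\operatorname{cf}(\kappa^+)>\kappa$ together with $(2^{\kappa})^{\kappa}=2^{\kappa}$. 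The whole theorem then reduces to the single assertion $X\subseteq M$, since this yields $|X|\le|M|=2^{\kappa}$.

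The engine of the argument is the observation that closures of free sequences cannot escape $M$. If $S\subseteq M$ is the range of a free sequence, then $|S|\le F(X)\le\kappa$, so $S\in[M]^{\le\kappa}\subseteq M$, whence $\overline S\in M$ by elementarity; and, being the closure of a free sequence, $\overline S$ satisfies $L(\overline S)\le FL(X)\le\kappa$ and $\psi(\overline S)\le\psi(X)\le\kappa$. I would then apply the standard elementary-submodel covering fact — an element $Y\in M$ with $L(Y)\cdot\psi(Y)\le\kappa$ is contained in $M$ — to $Y=\overline S$, concluding $\overline S\subseteq M$. It is precisely here that $FL(X)$, rather than the global Lindel\"of number, is enough: the covering fact is only ever invoked on closures of free sequences, never on $X$ itself.

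With this engine I would first show that $X\cap M$ is closed. Suppose toward a contradiction that some $p\in X\setminus M$ lies in $\overline{X\cap M}$. Mimicking Arhangel'skii, I would recursively construct a free sequence $\{x_\alpha:\alpha<\gamma\}\subseteq X\cap M$ with $p$ in the closure of every tail, selecting at stage $\alpha$ a point $x_\alpha\in X\cap M$ through a neighbourhood assignment living in $M$ refined against a fixed pseudobase at $p$; freeness is preserved by separating the new tail from the closure of the initial segment, which by the previous paragraph already lies in $M$ and therefore avoids $p$. Since $F(X)\le\kappa$, no free sequence has length exceeding $\kappa$, so the recursion terminates at some $\gamma\le\kappa$; the range $S$ of the constructed sequence then has $|S|\le\kappa$, so $\overline S\subseteq M$ by the engine lemma, forcing $p\in\overline S\subseteq M$, a contradiction. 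Thus $\overline{X\cap M}=X\cap M$. A concluding reflection argument, exploiting that every nonempty open set belonging to $M$ meets $X\cap M$, would then upgrade closedness to the desired $X\subseteq M$.

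The step I expect to be the main obstacle is the recursion of the third paragraph: maintaining \emph{genuine} freeness of $\{x_\alpha\}$ while forcing every choice to remain inside $M$, even though the natural separating neighbourhoods of $p$ are defined from the parameter $p\notin M$. The resolution is to route all selections through assignments that already belong to $M$ and to apply the engine lemma at each stage, so that the growing closure stays inside $M$ and away from $p$. This is also the step that must be carried through using only the Hausdorff axiom with $\psi$ in place of $\chi$, and using $FL(X)$ in place of $L(X)$; in particular, reconciling the absence of a global Lindel\"of bound with the need to capture points that might lie outside $\overline{X\cap M}$ is the delicate point, and it is handled by the same submodel reflection rather than by a cover of $X$.
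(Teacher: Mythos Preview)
Your engine lemma and the overall setup with a $\kappa$-closed $M$ of size $2^{\kappa}$ match the paper, but the two-step plan you outline has a genuine gap at the second step. Knowing that $X\cap M$ is closed, together with the fact that every nonempty open set in $M$ meets $X\cap M$, does \emph{not} let you reach a point $p\in X\setminus\overline{X\cap M}$: no open neighbourhood of such a $p$ need lie in $M$, so there is nothing to reflect. In the classical arguments this is exactly the step that consumes a global bound on $L(X)$ (cover $X$, extract a small subcover in $M$, reflect), and you explicitly do not have $L(X)\le\kappa$. Your final paragraph concedes this is ``the delicate point'' and asserts it is ``handled by the same submodel reflection'', but no mechanism is given; closedness of $X\cap M$ by itself gives you nothing to reflect against a point outside that closure. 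The recursion you describe for the first step is also under-specified: you want $p$ in the closure of every tail while routing all choices through objects in $M$, yet the separating data you name is a pseudobase at $p\notin M$; maintaining tail-closure membership of $p$ with only $\psi$ (not $\chi$ or $t$) is not explained.

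The paper avoids the split entirely and runs a single free-sequence-with-covers argument aimed directly at an arbitrary $p\in X\setminus M$. Using $\psi(X)\le\kappa$, every $x\in X\cap M$ has an open $U_x\in M$ with $x\in U_x\not\ni p$; one then builds a free sequence $\{x_\alpha\}\subseteq X\cap M$ together with $\kappa$-sized families $\mathcal U_\alpha\subseteq\{U\in\tau\cap M:p\notin U\}$ so that $\overline{\{x_\gamma:\gamma<\alpha\}}\subseteq\bigcup\bigcup_{\gamma\le\alpha}\mathcal U_\gamma$. The engine lemma is invoked only to see that each initial-segment closure lies in $M$, hence has Lindel\"of number $\le\kappa$, so the next $\mathcal U_\alpha$ exists. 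Since $F(X)\le\kappa$, the construction halts before $\kappa^+$ and produces a $\kappa$-sized $\mathcal V\subseteq M$ with $X\cap M\subseteq\bigcup\mathcal V$; $\kappa$-closure gives $\mathcal V\in M$, and elementarity upgrades $X\cap M\subseteq\bigcup\mathcal V$ to $X\subseteq\bigcup\mathcal V$, contradicting $p\notin\bigcup\mathcal V$. The reflection is applied to a \emph{cover}, not to closedness of $X\cap M$, and that is precisely what lets the argument capture points outside $\overline{X\cap M}$ without any global Lindel\"of hypothesis.
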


\begin{proof}
Let $FL(X) \cdot \psi(X) \cdot F(X)=\kappa$ and $M$ be a $\kappa$-closed elementary submodel of $H(\theta)$ where $\theta$ is a large enough regular cardinal, such that $X, \tau, \kappa \in M$, $\kappa \subset M$ and $|M|=2^\kappa$.

We claim that $X \subset M$. Suppose this is not the case and let $p \in X \setminus M$. For every $x \in X \cap M$ use the fact that $\psi(x,X) \leq \kappa$ to pick a $\kappa$-sized family $\mathcal{U}_x  \in M$ such that $\bigcap \mathcal{U}_x=\{x\}$. We actually have $\mathcal{U}_x \subset M$ (see, for example, Theorem 1.6 of \cite{D}), and we can use that to pick $U_x \in \mathcal{U}_x$ such that $x \in U_x$ and $p \notin U_x$.

Let $\mathcal{U}= \{U\in M \cap \tau: x \in U \wedge p \notin U\}$. Then $\mathcal{U}$ covers $X \cap M$. Suppose that for some $\beta < \kappa^+$ we have constructed points $\{x_\alpha: \alpha < \beta \} \subset X \cap M$ and subcollections $\{\mathcal{U}_\alpha: \alpha < \beta \}$ of $\mathcal{U}$ such that $|\mathcal{U}_\alpha| \leq \kappa$ for every $\alpha < \beta$ and $\overline{\{x_\alpha: \alpha < \gamma \}} \subset \bigcup \bigcup_{\alpha \leq \gamma} \mathcal{U}_\alpha$ for every $\gamma< \beta$.

Let $A \subset X$ be a $\kappa$-sized free sequence. Note that $|\overline{A}| \leq 2^\kappa$. Indeed, the set $RC(X)$ of all regular closed sets of $\overline{A}$ has cardinality at most $2^\kappa$. The closed pseudocharacter of a Hausdorff space is bounded by the product of the pseudocharacter and the Lindel\"of number, so $\psi_c(\overline{A}) \leq \kappa$. Now, for every $x \in \overline{A}$ choose a $\kappa$-sized family $\mathcal{U}_x \subset RC(X)$ such that $x \in Int(F)$ for every $F \in \mathcal{U}_x$ and $\bigcap \mathcal{U}_x=\{x\}$. The map $x \to \mathcal{U}_x$ is injective and hence $|\overline{A}| \leq (2^\kappa)^\kappa=\kappa$. From this observation it follows that if $A \in M$ and $|A| \leq \kappa$ then $\overline{A} \subset M$.

In particular, since $M$ is $\kappa$-closed we have that $\{x_\alpha: \alpha <\beta \} \in M$ and hence $\overline{\{x_\alpha: \alpha < \beta \}} \subset M$. Therefore, we can choose a $\kappa$ sized subcollection $\mathcal{U}_\beta$ of $\mathcal{U}$ covering $\overline{\{x_\alpha: \alpha < \beta \}}$. If $\bigcup_{\alpha \leq \beta} \mathcal{U}_\alpha$ does not cover $X \cap M$ pick a point $x_\beta \in X \cap M \setminus \bigcup_{\alpha \leq \beta} \mathcal{U}_\alpha$. If we didn't stop before reaching $\kappa^+$, then $\{x_\alpha: \alpha < \kappa^+ \}$ would be a free sequence of size $\kappa^+$ in $X$. Therefore, there is $\mathcal{V} \subset \mathcal{U}$ of size $\kappa$ such that $X \cap M \subset \bigcup \mathcal{V}$. Note that since $M$ is $\kappa$-closed we have $\mathcal{V} \in M$.Therefore $M \models X \subset \bigcup \mathcal{V}$ and hence $H(\theta) \models X \subset \bigcup \mathcal{V}$. So there is $V \in \mathcal{V}$ such that $p \in V$, which is a contradiction. 
\end{proof}

The proof of the increasing chain version of Theorem $\ref{basethm}$ relies on the following Lemmas. 

\begin{lemma} \label{shapfree}
Let $X$ be a space such that $FL(X) \leq \kappa$ and $\mathcal{U}$ be an open cover for $X$. Then there is a free sequence $F \subset X$ and a subcollection $\mathcal{V} \subset \mathcal{U}$ such that $|\mathcal{V}|=|F| \cdot \kappa$ and $X = \overline{F} \cup \bigcup \mathcal{V}$.
\end{lemma}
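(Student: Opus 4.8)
The plan is to build, by transfinite recursion, the free sequence $F=\{x_\alpha:\alpha<\delta\}$ together with an increasing chain of subfamilies $\mathcal{V}_\alpha\subseteq\mathcal{U}$, each of size at most $\kappa$, whose partial unions $O_\alpha=\bigcup_{\beta\leq\alpha}\bigcup\mathcal{V}_\beta$ cover more and more of $X$. The family $\mathcal{V}$ of the conclusion will be $\bigcup_{\beta\le\delta}\mathcal{V}_\beta$, and the role of the hypothesis $FL(X)\leq\kappa$ is precisely to let us cover, by few members of $\mathcal{U}$, the closure of the free sequence built so far.

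Concretely, at stage $\alpha$ suppose the points $\{x_\beta:\beta<\alpha\}$ and the families $\{\mathcal{V}_\beta:\beta<\alpha\}$ have been chosen, and write $F_\alpha=\{x_\beta:\beta<\alpha\}$. I would first check that $F_\alpha$ is a free sequence: for $\gamma<\alpha$ we will have $\overline{F_\gamma}\subseteq\bigcup\mathcal{V}_\gamma\subseteq O_\gamma$, while each $x_\eta$ with $\gamma\leq\eta<\alpha$ was chosen outside $O_\eta\supseteq O_\gamma$; since $O_\gamma$ is open its complement is closed, so it contains $\overline{\{x_\eta:\gamma\leq\eta<\alpha\}}$, and this closure is therefore disjoint from $\overline{F_\gamma}\subseteq O_\gamma$. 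Having verified that $F_\alpha$ is free, $FL(X)\leq\kappa$ gives $L(\overline{F_\alpha})\leq\kappa$, so from the cover $\mathcal{U}$ of $X\supseteq\overline{F_\alpha}$ I can extract $\mathcal{V}_\alpha\subseteq\mathcal{U}$ with $|\mathcal{V}_\alpha|\leq\kappa$ and $\overline{F_\alpha}\subseteq\bigcup\mathcal{V}_\alpha$. If $O_\alpha=X$ I stop and set $\delta=\alpha$; otherwise I pick $x_\alpha\in X\setminus O_\alpha$ and continue.

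The recursion must terminate, because the points are pairwise distinct: for $\beta<\alpha$ one has $x_\beta\in\overline{F_{\beta+1}}\subseteq O_{\beta+1}\subseteq O_\alpha$, whereas $x_\alpha\notin O_\alpha$. Hence at most $|X|$ points are chosen and the process halts at some $\delta$ with $X=O_\delta$. Setting $F=F_\delta$ and $\mathcal{V}=\bigcup_{\beta\leq\delta}\mathcal{V}_\beta$, the separation argument above (now applied to the final sequence) shows $F$ is free, we have $X=O_\delta=\bigcup\mathcal{V}$, so in particular $X=\overline{F}\cup\bigcup\mathcal{V}$, and $|\mathcal{V}|\leq|\delta|\cdot\kappa=|F|\cdot\kappa$; if strict inequality occurs one pads $\mathcal{V}$ with further members of $\mathcal{U}$ to reach $|\mathcal{V}|=|F|\cdot\kappa$.

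I expect the main obstacle to be the bookkeeping that keeps the sequence free throughout the construction. The key realization, which makes everything else routine, is that choosing each new point in the complement of the \emph{open} region $O_\alpha$ already covered simultaneously separates that point --- and, by closedness of the complement, its entire tail --- from every earlier initial segment; this is what upgrades the pointwise choices into the closure-disjointness required by the definition of a free sequence. A minor secondary point is the cardinality computation for $\mathcal{V}$ and the padding needed to turn the inequality $|\mathcal{V}|\le|F|\cdot\kappa$ into the stated equality.
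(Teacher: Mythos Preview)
Your argument is correct and follows essentially the same Shapirovskii-type construction as the paper: recursively cover the closure of the free sequence built so far by $\leq\kappa$ members of $\mathcal{U}$ and pick the next point outside everything covered, which keeps the sequence free. Your write-up is in fact more careful than the paper's (you explicitly verify freeness and termination); the only quibble is the padding step to force $|\mathcal{V}|=|F|\cdot\kappa$, which may fail if $|\mathcal{U}|<|F|\cdot\kappa$, but the paper glosses over this too and only the inequality $|\mathcal{V}|\leq|F|\cdot\kappa$ is ever used.
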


\begin{proof}
Suppose you have constructed, for some ordinal $\beta$, a free sequence $\{x_\alpha: \alpha < \beta \}$ and $\kappa$-sized subcollections $\{\mathcal{U}_\alpha: \alpha < \beta \}$ of $\mathcal{U}$ such that $\overline{\{x_\alpha : \alpha < \gamma\}} \subset \bigcup_{\alpha \leq \gamma} \bigcup \mathcal{U}_\alpha$ for every $\gamma < \beta$.

Let $\mathcal{U}_\beta$ be a $\kappa$-sized subcollection of $\mathcal{U}$ covering the subspace $\overline{\{x_\alpha : \alpha < \beta \}}$ and, if you can, pick a point $x_\beta \in X \setminus \bigcup_{\alpha \leq \beta} \bigcup \mathcal{U}_\beta$. Let $\mu$ be the least ordinal such that $$\overline{\{x_\alpha: \alpha < \mu\}} \cup \bigcup_{\alpha < \mu} \bigcup \mathcal{U}_\alpha=X.$$ Then $F=\{x_\alpha : \alpha < \mu\}$ is a free sequence and if we set $\mathcal{V}=\bigcup_{\alpha < \kappa} \bigcup \mathcal{U}_\alpha$ we have $|\mathcal{V}|=|F| \cdot \kappa$.
\end{proof}

\begin{lemma} \label{minusap}
For every $x \in X$ we have that $FL(X \setminus \{x\}) \leq FL(X) \cdot \psi(X)$.
\end{lemma}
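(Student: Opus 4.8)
The plan is to fix a free sequence $D\subseteq X\setminus\{x\}$ and prove directly that $L(\overline{D}^{X\setminus\{x\}})\le\kappa$, where $\kappa=FL(X)\cdot\psi(X)$; since $D$ is arbitrary this gives $FL(X\setminus\{x\})\le\kappa$. Write $Z=\overline{D}^X$. Being closed in $X$, the subspace $Z$ inherits the two relevant bounds: for $A\subseteq Z$ the closures $\overline{A}^Z$ and $\overline{A}^X$ coincide, so every free sequence of $Z$ is a free sequence of $X$ and hence $FL(Z)\le FL(X)\le\kappa$, while clearly $\psi(Z)\le\psi(X)\le\kappa$. If $x\notin Z$ there is nothing to do: then $D$ is already free in $X$, $\overline{D}^{X\setminus\{x\}}=Z$, and $L(Z)\le FL(X)\le\kappa$. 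So I assume $x\in Z$ and must bound $L(Z\setminus\{x\})$.

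The engine of the argument is that subsequences of $D$ staying away from $x$ remain free in the whole space. Using $\psi(x,X)\le\kappa$, fix open sets $\{V_\alpha:\alpha<\kappa\}$ with $x\in V_\alpha$ and $\bigcap_{\alpha}V_\alpha=\{x\}$, and for each $\alpha$ put $D_\alpha=D\setminus V_\alpha$. As a subset of a free sequence, $D_\alpha$ is free; moreover $D_\alpha\subseteq X\setminus V_\alpha$, a closed set omitting $x$, so $x\notin\overline{D_\alpha}^X$, which forces $D_\alpha$ to be free \emph{in $X$} with $\overline{D_\alpha}^X\subseteq Z\setminus\{x\}$. Hence each $\overline{D_\alpha}^X$ is $\kappa$-Lindel\"of, and since a union of at most $\kappa$ subspaces of Lindel\"of number $\le\kappa$ again has Lindel\"of number $\le\kappa$, everything reduces to the covering statement $Z\setminus\{x\}=\bigcup_{\alpha<\kappa}\overline{D_\alpha}^X$. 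A point $z\in Z\setminus\{x\}$ missing the right-hand side must, for each $\alpha$ with $z\notin V_\alpha$, have a neighbourhood meeting $D$ only inside $V_\alpha$; since $z\in\overline{D}$ this yields $z\in\overline{D\cap V_\alpha}\subseteq\overline{V_\alpha}$, and so $z\in\bigcap_\alpha\overline{V_\alpha}$. Thus the covering statement is \emph{exactly} the assertion that the $V_\alpha$ can be chosen so that $\bigcap_\alpha\overline{V_\alpha}^Z=\{x\}$, i.e. that $\psi_c(x,Z)\le\kappa$.

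So the lemma comes down to proving $\psi_c(x,Z)\le\kappa$, and this is where the main obstacle lies. The quoted bound $\psi_c\le\psi\cdot L$ for Hausdorff spaces would finish things at once, giving $\psi_c(x,Z)\le\psi(Z)\cdot L(Z)$, but only after one knows $L(Z)\le\kappa$, which is equivalent to what we want; the naive route is circular. I would break the circularity by establishing $L(Z)\le\kappa$ on its own, running the closing-off construction of Theorem~\ref{basethm} and Lemma~\ref{shapfree} inside $Z$: from an open cover of $Z$ one recursively builds a free sequence of $Z$ together with $\kappa$-sized subfamilies covering the closures of its initial segments, the free-sequence property being guaranteed exactly as in those proofs. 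The hard part, which I expect to be the delicate heart of the argument, is that $Z$ satisfies only $FL(Z)\le\kappa$ and not $F(Z)\le\kappa$, so a priori the recursion could run past $\kappa^{+}$; one must show that $\psi(X)$ prevents this—for instance by proving that a free sequence of $X$ contained in $Z$ whose closure omits $x$ cannot reach length $\kappa^{+}$, or equivalently by closing up the family $\{V_\alpha\}$ to witness $\psi_c(x,Z)\le\kappa$ directly. Playing the neighbourhoods $V_\alpha$ against the free-sequence structure of $D$ to force this termination is the step I anticipate will require the most care.
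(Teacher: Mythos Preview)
Your overall strategy coincides with the paper's: fix a free sequence $F$ in $X\setminus\{x\}$, use a $\kappa$-sized family $\mathcal U$ of open neighbourhoods of $x$ with $\bigcap\mathcal U=\{x\}$ to write $Cl_{X\setminus\{x\}}(F)$ as a $\kappa$-sized union of pieces, and bound the Lindel\"of number of each piece via $FL$. The paper takes the pieces to be $\overline{F}\setminus U$ (for which the covering identity $Cl_{X\setminus\{x\}}(F)=\bigcup_{U\in\mathcal U}\overline{F}\setminus U$ is immediate), records that $F\setminus U$ is a free sequence in the closed subspace $X\setminus U$ and that $FL(X\setminus U)\le\kappa$, and then simply concludes. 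You take the pieces to be $\overline{F\setminus V_\alpha}$ instead (for which the Lindel\"of bound is immediate) and then correctly observe that the corresponding covering identity $Z\setminus\{x\}=\bigcup_\alpha\overline{F\setminus V_\alpha}$ is \emph{not} automatic: it is exactly the assertion $\psi_c(x,\overline{F})\le\kappa$.

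These are the same difficulty viewed from opposite ends. Read literally, the paper's argument only yields $L(\overline{F\setminus U})\le\kappa$, not $L(\overline{F}\setminus U)\le\kappa$; in general $\overline{F\setminus U}\subsetneq\overline{F}\setminus U$, and equality is precisely the closed-pseudocharacter condition you isolated. So you have been more scrupulous than the paper, and the obstacle you describe is real. Your proposal, however, does not actually resolve it: the suggested detour ``prove $L(Z)\le\kappa$ directly via the closing-off construction of Lemma~\ref{shapfree}'' runs into exactly the problem you name (the hypotheses give no bound on $F(Z)$, so the recursion need not terminate by $\kappa^{+}$), and you leave the termination step as an expectation rather than an argument. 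In short, your plan matches the paper's and your analysis is sharper, but the paper's three-line proof does not supply the missing step either; what remains unproved in your write-up is glossed over there as well.
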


\begin{proof}
Set $\kappa=FL(X) \cdot \psi(X)$ and let $F \subset X \setminus \{x\}$ be a free sequence in $X \setminus \{x\}$. Let $\mathcal{U}$ be a $\kappa$-sized family of open neighbourhood of $x$ such that $\bigcap \mathcal{U}=\{x\}$. Note that $F \subset \bigcup \{X \setminus U: U \in \mathcal{U} \}$, $F \setminus U$ is a free sequence in $X \setminus U$, and $FL(X \setminus U) \leq \kappa$ for every $U \in \mathcal{U}$. Now $Cl_{X \setminus \{x\}}(F)=\bigcup_{U \in \mathcal{U}} \overline{F} \setminus U$. Therefore $L(Cl_{X \setminus \{x\}}(F)) \leq \kappa$ and we are done.
\end{proof}

\begin{theorem} \label{firstmainthm}
Let $(X, \tau)$ be a topological space and $\{X_\alpha: \alpha < \lambda \}$ be an increasing chain of subspaces of $X$ such that $X=\bigcup_{\alpha < \lambda} X_\alpha$ and $FL(X_\alpha) \cdot F(X_\alpha) \cdot \psi(X_\alpha) \leq \kappa$. Then $|X| \leq 2^\kappa$.
\end{theorem}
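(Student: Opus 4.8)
The plan is to prove Theorem \ref{firstmainthm} by building an elementary submodel $M$ of size $2^\kappa$ and showing $X \subset M$, mirroring the proof of Theorem \ref{basethm} but now exploiting the chain structure together with Lemmas \ref{shapfree} and \ref{minusap}. First I would fix a large enough regular cardinal $\theta$ and take a $\kappa$-closed elementary submodel $M \prec H(\theta)$ with $X, \tau, \kappa, \langle X_\alpha : \alpha < \lambda \rangle \in M$, $\kappa \subset M$, and $|M| = 2^\kappa$. As in the base theorem, the key preliminary observation is that for every $\kappa$-sized $A \in M$ we have $\overline{A} \subset M$: the cardinality bound $|\overline{A}| \leq 2^\kappa$ for closures of $\kappa$-sized free sequences, established via regular closed sets and closed pseudocharacter inside the proof of Theorem \ref{basethm}, carries over because $FL(X_\alpha) \cdot F(X_\alpha) \cdot \psi(X_\alpha) \leq \kappa$ controls exactly these parameters on each piece of the chain.

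Next I would suppose toward a contradiction that there is a point $p \in X \setminus M$. Since the chain is increasing and $p \in X = \bigcup_{\alpha < \lambda} X_\alpha$, there is a least index with $p \in X_\alpha$; by elementarity and the fact that the relevant cofinal structure of the chain is reflected in $M$, I expect to be able to locate the argument inside a single $X_\alpha$ (more precisely, inside $X_\alpha \cap M$ for a suitable $\alpha \in M$) where the cardinal invariants are bounded by $\kappa$. The pseudocharacter bound $\psi(X_\alpha) \leq \kappa$ lets me separate $p$ from each $x \in X_\alpha \cap M$ by an open set $U_x \in M \cap \tau$ with $x \in U_x$ and $p \notin U_x$, producing an open cover $\mathcal{U} = \{U \in M \cap \tau : p \notin U\}$ of $X_\alpha \cap M$.

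The heart of the proof is to apply the free-sequence covering principle of Lemma \ref{shapfree} to $X_\alpha$ (whose $FL$ is at most $\kappa$): it yields a free sequence $F$ and a subcollection $\mathcal{V} \subset \mathcal{U}$ with $|\mathcal{V}| = |F| \cdot \kappa$ and $X_\alpha = \overline{F} \cup \bigcup \mathcal{V}$. Because $F(X_\alpha) \leq \kappa$, the free sequence $F$ has size at most $\kappa$, so $|\mathcal{V}| \leq \kappa$; here is where the $F(X_\alpha) \leq \kappa$ hypothesis does its essential work, converting the covering lemma into a genuinely $\kappa$-sized subcover modulo the closure $\overline{F}$. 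Using $\kappa$-closedness of $M$ and the closure-containment observation, I can arrange $F \in M$, $\mathcal{V} \in M$, and $\overline{F} \subset M$, so that $p \notin \overline{F}$. Then by elementarity $M \models X_\alpha \subset \overline{F} \cup \bigcup \mathcal{V}$, hence $H(\theta) \models$ the same, which forces $p \in \overline{F} \cup \bigcup \mathcal{V}$; but $p \notin \overline{F}$ and $p \notin \bigcup \mathcal{V}$ (as $\mathcal{V} \subset \mathcal{U}$), the desired contradiction.

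The main obstacle I anticipate is the bookkeeping that localizes the argument to a single stage $X_\alpha$ of the chain while keeping all the objects ($F$, $\mathcal{V}$, the cover $\mathcal{U}$) inside $M$ and ensuring their closures stay inside $M$ as well; in particular one must invoke Lemma \ref{minusap} to control $FL$ after removing points and to guarantee that the Lindel\"of-type number of the relevant closures does not blow up past $\kappa$ when passing between stages. The interplay between the reflection of the chain into $M$ and the application of Lemma \ref{shapfree} at the correct level is the delicate point, but once the covering lemma is applied at a stage where all three invariants are $\leq \kappa$, the elementarity contradiction closes the argument exactly as in Theorem \ref{basethm}.
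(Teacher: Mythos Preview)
Your proposal has two genuine gaps.

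First, the ``key preliminary observation'' that $\overline{A} \subset M$ for every $\kappa$-sized $A \in M$ does \emph{not} carry over from Theorem~\ref{basethm} as you claim. In the base theorem this rests on $|\overline{A}| \leq 2^\kappa$, which in turn uses $\psi_c(\overline{A}) \leq L(\overline{A}) \cdot \psi(\overline{A}) \leq FL(X) \cdot \psi(X) \leq \kappa$. In the chain setting you only have $FL(X_\alpha) \cdot \psi(X_\alpha) \leq \kappa$ for each $\alpha$, and the closure of $A$ in $X$ need not lie inside any single $X_\alpha$, so you have no direct control over $FL(\overline{A})$ or $\psi(\overline{A})$. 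Establishing that closures of small subsets of $X \cap M$ stay in $M$ is exactly the non-trivial core of the chain version; the paper proves it (its Claim~1) by a separate elementarity argument: assume $p \in \overline{C} \setminus M$, choose $\theta$ with $\overline{C} \cap M \subset X_\theta$, use $\psi(X_\theta) \leq \kappa$ to decompose $X_\theta \setminus \{p\}$ into $\kappa$ closed pieces, apply Lemma~\ref{shapfree} on each piece, split $C$ accordingly, and reflect. Your sketch supplies no substitute for this step.

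Second, your intended application of Lemma~\ref{shapfree} to $X_\alpha$ is illegitimate: the family $\mathcal{U}$ is only shown to cover $X_\alpha \cap M$, not all of $X_\alpha$, and the lemma requires an open cover of the whole space. You also need $\alpha \in M$ for the final elementarity step ``$M \models X_\alpha \subset \overline{F} \cup \bigcup \mathcal{V}$'', but when $\lambda > 2^\kappa$ the set $M \cap \lambda$ is bounded in $\lambda$ and there is no reason any $\alpha$ with $p \in X_\alpha$ lies in $M$. Even granting all this, the outputs $F, \mathcal{V}$ of Lemma~\ref{shapfree} are produced by external choices and are not automatically in $M$; your ``arrange $F \in M$, $\mathcal{V} \in M$'' is unjustified. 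The paper sidesteps these issues by building the free sequence in $X \cap M$ step by step rather than via a single invocation of Lemma~\ref{shapfree}: at each stage, Claim~1 makes the partial closure bounded, hence contained in some $X_\theta$, hence of Lindel\"of number at most $\kappa$, so a $\kappa$-sized subfamily of $\mathcal{U}$ covers it and lies in $M$ by $\kappa$-closedness. Lemma~\ref{minusap} is used inside the proof of Claim~2 (that $\mathcal{U}$ covers $X \cap M$), not in the way you suggest.
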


\begin{proof}
If $\lambda \leq 2^\kappa$ then we are done by Theorem $\ref{basethm}$, so we can assume that $\lambda=(2^\kappa)^+$. 

Let $\mu$ be a large enough regular cardinal and choose an elementary submodel $M \prec H(\mu)$ such that $[M]^\kappa \subset M$, $|M|=2^\kappa$, and $\{X, \tau, \kappa, \lambda \} \cup \kappa \subset M$.

Call a set $C \subset X$ \emph{bounded} if $|C| \leq 2^\kappa$.

\vspace{.1in}

\noindent {\bf Claim 1.} If $C \in [X \cap M]^{\leq \kappa}$, then $\overline{C}$ is bounded.

\begin{proof}[Proof of Claim 1]
Claim 1 will be proved if we can show that $\overline{C} \subset X \cap M$. So, suppose that this is not true and choose $p \in \overline{C} \setminus M$. Choose $\theta$ large enough, so that $\overline{C} \cap M \subset X_\theta$. By $\psi(X_\theta) \leq \kappa$ we can find open neighbourhoods $\{U_\alpha: \alpha < \kappa \}$ of the point $p$ such that $X_\theta \setminus \{p\}=\bigcup_{\alpha < \kappa} X_\theta \setminus U_\alpha$. By Lemma $\ref{shapfree}$ we can find a free sequence $D_\alpha \subset X_\theta \setminus U_\alpha$ and relative open sets $\{V_{\alpha \beta}: \beta < \kappa \}$ in $X_\theta \setminus U_\alpha$ such that $X_\theta \setminus U_\alpha \subset \overline{D_\alpha} \cup \bigcup_{\beta < \kappa} V_{\alpha \beta}$ for every $\alpha < \kappa$. By $FL(X_\theta \setminus U_\alpha) \leq \kappa$ we can find relative open sets $\{G_{\alpha \beta}: \beta < \kappa \}$ in $X_\theta \setminus U_\alpha$ such that $\overline{D_\alpha} \subset \bigcup_{\beta < \kappa} G_{\alpha \beta}$, for every $\alpha < \kappa$.

Note that $p \notin \overline{V_{\alpha \beta}} \cup \overline{G_{\alpha \beta}}$, for every $\alpha, \beta < \kappa$. Setting $C_{\alpha \beta}=V_{\alpha \beta} \cap C$ and $E_{\alpha \beta}=G_{\alpha \beta} \cap C$ we then have:

$$\overline{C} \cap X_\theta \setminus \{p\} = \bigcup_{\alpha, \beta < \kappa} (\overline{C_{\alpha, \beta}} \cup \overline{E_{\alpha, \beta}}) \cap X_\theta$$

Note now that by $\kappa$-closedness of $M$, $C_{\alpha \beta} \in M$ and $E_{\alpha \beta} \in M$, for every $\alpha, \beta$ and $\theta$.

We have:

$$\overline{C} \cap M=\bigcup_{\alpha, \beta < \kappa} (\overline{C_{\alpha, \beta}} \cup \overline{E_{\alpha, \beta}}) \cap M$$

So:

$$M \models \overline{C}=\bigcup_{\alpha, \beta<\kappa} (\overline{C_{\alpha, \beta}} \cup \overline{E_{\alpha \beta}})$$

Which implies:

$$H(	\mu) \models \overline{C}=\bigcup_{\alpha, \beta<\kappa} (\overline{C_{\alpha \beta}} \cup \overline{E_{\alpha \beta}})$$

But that is a contradiction, because:

$$H(\mu) \models p \in \overline{C} \setminus \bigcup_{\alpha, \beta < \kappa} (\overline{C_{\alpha, \beta}} \cup \overline{E_{\alpha, \beta}})$$

\renewcommand{\qedsymbol}{$\triangle$}
\end{proof}

Now we claim that $X \subset M$. Suppose not and choose $p \in X \setminus M$.

\noindent {\bf Claim 2.} The collection $\mathcal{U}=\{U \in M \cap \tau: p \notin U \}$ is an open cover of $X \cap M$.

\begin{proof}[Proof of Claim 2]

Fix $x \in X \cap M$ and let $\mathcal{V}=\{V \in \tau: x \notin \overline{V} \}$. Note that $\mathcal{V} \in M$ and $\mathcal{V}$ covers $X \setminus \{x\}$. Suppose you have constructed subcollections $\{\mathcal{V}_\alpha: \alpha < \beta \} $ of $\mathcal{V}$ such that $\mathcal{V}_\alpha \in M$, $|\mathcal{V}_\alpha| \leq \kappa$ for every $\alpha < \beta$ and a free sequence $\{x_\alpha: \alpha < \beta \} \subset X \cap M$ such that $Cl_{X \setminus \{x\}}(\{x_\alpha: \alpha < \gamma\}) \subset \bigcup_{\alpha < \gamma} \mathcal{V}_\alpha$ for every $\gamma < \beta$. The set $Cl_{X \setminus \{x\}}(\{x_\alpha: \alpha < \beta \})$ is bounded, so we can find an ordinal $\lambda_\beta < \lambda$ such that $Cl_{X \setminus \{x\}}(\{x_\alpha: \alpha <\beta \}) \subset X_{\lambda_\beta}$. Since $FL(X_{\lambda_\beta}) \cdot \psi(X) \leq \kappa$, by Lemma $\ref{minusap}$ we have that the Lindel\"of number of $Cl_{X \setminus \{x\}}(\{x_\alpha: \alpha < \beta\})$ is at most $\kappa$ and hence we can pick a family $\mathcal{V}_\beta \in [\mathcal{V}]^{\leq \kappa}$ such that $Cl_{X \setminus \{x\}}(\{x_\alpha: \alpha < \beta\}) \subset \bigcup \mathcal{V}_\beta$. If $\bigcup_{\alpha \leq \beta} \mathcal{V}_\beta$ covers $X\setminus \{x\}$ we stop, otherwise we pick $x_\beta \in (X \setminus \{x\} \cap M) \setminus \bigcup_{\alpha \leq \beta} \mathcal{V}_\beta$. If we carried this on for $\kappa^+$ many steps, then $F=\{x_\alpha: \alpha < \kappa^+\}$ would be a free sequence of cardinality $\kappa^+$ in $X \setminus \{x\}$. Since $F$ is bounded, we can choose $\theta< \lambda$ such that $F \subset X_\theta$. So $L(Cl_{X_\theta}(F)) \leq \kappa$. But $F$ cannot converge to $x$, because every set of cardinality $\kappa^+$ of a space of Lindel\"of number $\kappa$ has a complete accumulation point. Therefore there is an open neighbourhood $U$ of $x$ which misses $\kappa^+$ many points of $F$. Therefore $F \setminus U$ is a free sequence in $X$ of cardinality $\kappa^+$, but that contradicts $F(X_\theta) \leq \kappa$.

So there is a family $\mathcal{W} \in [\mathcal{U}]^{\leq \kappa}$ such that $X \setminus \{x\} \subset \bigcup \mathcal{W}$. By elementarity, we can assume that $\mathcal{W} \in M$ and hence $\mathcal{W} \subset M$. Let now $W \in \mathcal{W}$ be such that $p \in W$. Then the set $U:=X \setminus \overline{W} \in M$ is a neighbourhood of $x$ which misses $p$.
\renewcommand{\qedsymbol}{$\triangle$}
\end{proof}

Suppose that for some $\beta < \kappa^+$ we have constructed a free sequence $\{x_\alpha: \alpha < \beta \} \subset X \cap M$ and subcollections $\{\mathcal{U}_\alpha: \alpha < \beta \}$ of $\mathcal{U}$ such that $\mathcal{U}_\alpha \in M$, $|\mathcal{U}_\alpha| \leq \kappa$ and $\overline{\{x_\gamma: \gamma < \alpha \}} \subset \bigcup \bigcup_{\gamma <\alpha} \mathcal{U}_\alpha$, for every $\alpha <\beta$. Since $\overline{\{x_\alpha: \alpha <\beta \}}$ is bounded, we have that $L(\overline{\{x_\alpha: \alpha < \beta\}}) \leq \kappa$ and hence we can find a subcollection $\mathcal{U}_\beta$ of $\mathcal{U}$ of size $\kappa$ such that $\overline{\{x_\alpha: \alpha < \beta\}} \subset \bigcup \mathcal{U}_\beta$. If $\bigcup_{\alpha \leq \beta} \mathcal{U}_\alpha$ does not cover $X \cap M$ we can find a point $x_\beta \in X \cap M \setminus \bigcup_{\alpha \leq \beta} \mathcal{U}_\alpha$. If we didn't stop before reaching $\kappa^+$, then $\{x_\alpha : \alpha < \kappa^+\}$ would be a $\kappa^+$-sized free sequence in $X$. But this can't happen, because $\{x_\alpha: \alpha < \kappa^+\}$ is bounded. So there is a $\mathcal{V} \in [\mathcal{U}]^{\leq \kappa}$ such that $X \cap M \subset \bigcup \mathcal{V}$. But since $M$ is $\kappa$-closed we have that $\mathcal{V} \in M$ and hence $M \models X \subset \bigcup \mathcal{V}$. Therefore $H(\mu) \models X \subset \bigcup \mathcal{V}$, and hence there is $V \in \mathcal{V}$ such that $p \in V$, which is a contradiction.

\end{proof} 

As a corollary, we find a result related to discrete reflection of cardinality, which will be the main subject of the next section.

\begin{lemma} \label{lemchar}
\cite{S1} Let $\kappa$ be an infinite cardinal and $X$ be a space where $|\overline{D}| \leq \kappa$ for every discrete $D \subset X$. Then $\psi(X) \leq \kappa$.
\end{lemma}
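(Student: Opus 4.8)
The plan is to bound $\psi(x,X)$ for each fixed point $x \in X$ separately, arguing by contradiction: assuming $\psi(x,X) > \kappa$, I will build a discrete set of size $\kappa^+$. Since $D \subseteq \overline{D}$, the hypothesis forces $|D| \leq \kappa$ for every discrete $D$, so such a set cannot exist. Throughout I will use two standard consequences of Hausdorffness: for every $y \neq x$ there is an open $U \ni x$ with $y \notin U$, and (taking disjoint neighbourhoods of $x$ and $y$) there is an open $U \ni x$ with $y \notin \overline{U}$.

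The construction proceeds by recursion on $\alpha < \kappa^+$, choosing points $x_\alpha$ and open sets $U_\alpha \ni x$. At stage $\alpha$, write $E_\alpha = \{x_\beta : \beta < \alpha\}$, which the construction keeps discrete; by the hypothesis $|\overline{E_\alpha}| \leq \kappa$. For each $y \in \overline{E_\alpha} \setminus \{x\}$ pick an open $W_y \ni x$ with $y \notin W_y$, and let $\mathcal{W}_\alpha$ be the family of these. Now consider $G_\alpha = \bigcap_{\beta < \alpha} U_\beta \cap \bigcap \mathcal{W}_\alpha$, an intersection of at most $\kappa$ open neighbourhoods of $x$. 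If $G_\alpha = \{x\}$ the recursion halts, and this family of at most $\kappa$ open sets witnesses $\psi(x,X) \leq \kappa$. Otherwise, since $\psi(x,X) > \kappa$, choose $x_\alpha \in G_\alpha \setminus \{x\}$; because $x_\alpha \in \bigcap \mathcal{W}_\alpha$ we have $x_\alpha \neq y$ for every $y \in \overline{E_\alpha} \setminus \{x\}$, and $x_\alpha \neq x$, whence $x_\alpha \notin \overline{E_\alpha}$. Finally choose an open $U_\alpha \ni x$ with $x_\alpha \notin \overline{U_\alpha}$.

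Assuming the recursion never halts, I claim $D = \{x_\alpha : \alpha < \kappa^+\}$ is discrete. Fix $\alpha$. Since $x_\alpha \notin \overline{E_\alpha}$, the open set $X \setminus \overline{E_\alpha}$ is a neighbourhood of $x_\alpha$ missing every earlier point. Moreover every later point $x_\gamma$ ($\gamma > \alpha$) satisfies $x_\gamma \in G_\gamma \subseteq U_\alpha \subseteq \overline{U_\alpha}$, while $x_\alpha \notin \overline{U_\alpha}$, so $X \setminus \overline{U_\alpha}$ is a neighbourhood of $x_\alpha$ missing every later point. Intersecting the two neighbourhoods isolates $x_\alpha$ in $D$. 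As the $x_\alpha$ are pairwise distinct, $|D| = \kappa^+$, so $|\overline{D}| \geq \kappa^+ > \kappa$, contradicting the hypothesis. Hence the recursion halts at some $\alpha < \kappa^+$ with $G_\alpha = \{x\}$, giving $\psi(x,X) \leq \kappa$; as $x$ was arbitrary, $\psi(X) \leq \kappa$.

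The main obstacle is keeping $D$ discrete across the whole transfinite sequence, which requires both left- and right-separation. Right-separation is cheap: the sets $U_\alpha$ push all later points into $\overline{U_\alpha}$ while keeping $x_\alpha$ outside it. Left-separation — arranging that $x_\alpha$ avoids the closure of all previously chosen points — is exactly where the hypothesis does the essential work: it bounds $|\overline{E_\alpha}|$ by $\kappa$, which is what allows the extra neighbourhoods $\mathcal{W}_\alpha$ to be absorbed into $G_\alpha$ without the total family exceeding $\kappa$ sets, all while $\psi(x,X) > \kappa$ still guarantees $G_\alpha \neq \{x\}$.
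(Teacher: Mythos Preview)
Your proof is correct, but it takes a different route from the paper's. The paper argues directly rather than by contradiction: it covers $X\setminus\{x\}$ by the family $\mathcal{V}=\{V\text{ open}:x\notin\overline{V}\}$, applies Shapirovskii's Lemma to obtain a discrete $D\subset X\setminus\{x\}$ and a subfamily $\mathcal{U}\subset\mathcal{V}$ with $|\mathcal{U}|=|D|$ and $X\setminus\{x\}\subset\overline{D}\cup\bigcup\mathcal{U}$, and then simply reads off a pseudobase at $x$: the sets $X\setminus\overline{U}$ for $U\in\mathcal{U}$ together with $X\setminus\{y\}$ for $y\in\overline{D}\setminus\{x\}$ intersect down to $\{x\}$, and there are at most $|\overline{D}|\le\kappa$ of them. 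Your argument is essentially Shapirovskii's recursion unpacked and woven into a contradiction: you build the discrete set step by step, bounding $|\overline{E_\alpha}|$ at each stage so the auxiliary family $\mathcal{W}_\alpha$ stays small. The paper's proof is shorter because it outsources the recursion to a standard lemma; yours is more self-contained and makes the mechanism (left- versus right-separation) explicit, at the cost of some extra bookkeeping.
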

 
 \begin{proof}
Let $x \in X$. Now let $\mathcal{V}=\{V \subset X: V$ is open and $x \notin \overline{V} \}$. Then $\mathcal{V}$ covers $X \setminus \{x\}$ and hence we can find a discrete $D \subset X \setminus \{x\}$ and a subcollection $\mathcal{U} \subset \mathcal{V}$ with $|\mathcal{U}|=|D|$ such that $X \setminus \{x\} \subset \bigcup \mathcal{U} \cup \overline{D}$. So $(\bigcap_{x \in \overline{D} \setminus \{x\}} X \setminus \{x\}) \cap (\bigcap_{U \in \mathcal{U}} X \setminus \overline{U})=\{x\}$, which implies that $\psi(x,X) \leq \kappa$.
 \end{proof}

\begin{corollary}
Let $\{X_\alpha: \alpha < \lambda \}$ be an increasing chain of spaces such that $|\overline{D}| \leq \kappa$ for every discrete set $D \subset X_\alpha$ and every $\alpha < \lambda$. Then $|\bigcup_{\alpha < \lambda} X_\alpha| \leq 2^\kappa$.
\end{corollary}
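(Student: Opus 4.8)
The plan is to reduce the statement to Theorem \ref{firstmainthm} by showing that the single hypothesis on closures of discrete sets forces each of the three cardinal invariants $FL$, $F$ and $\psi$ of every $X_\alpha$ to be at most $\kappa$. So I fix $\alpha < \lambda$ and treat the three factors separately, all invariants and closures being computed inside the subspace $X_\alpha$.

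The pseudocharacter is immediate: since every discrete $D \subset X_\alpha$ satisfies $|\overline{D}| \leq \kappa$, Lemma \ref{lemchar} applies verbatim and yields $\psi(X_\alpha) \leq \kappa$.

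For the other two invariants the key observation is that every free sequence is a discrete set. Indeed, if $\{x_\gamma : \gamma < \mu\}$ is free, then applying the defining condition at $\beta = \gamma$ and at $\beta = \gamma + 1$ shows that $x_\gamma$ lies outside both $\overline{\{x_\eta : \eta < \gamma\}}$ and $\overline{\{x_\eta : \eta > \gamma\}}$, so the intersection of two suitable open sets isolates $x_\gamma$ from the rest of the sequence. Granting this, a free sequence of size $\kappa^+$ in $X_\alpha$ would be a discrete set whose closure has cardinality at least $\kappa^+$, contradicting the hypothesis; hence $F(X_\alpha) \leq \kappa$. Likewise, for any free sequence $F \subset X_\alpha$ the set $\overline{F}$ is the closure of a discrete set, so $|\overline{F}| \leq \kappa$, and a space of cardinality at most $\kappa$ trivially has Lindel\"of number at most $\kappa$; taking the supremum over all free sequences gives $FL(X_\alpha) \leq \kappa$.

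Combining the three bounds, $FL(X_\alpha) \cdot F(X_\alpha) \cdot \psi(X_\alpha) \leq \kappa$ for every $\alpha < \lambda$, and Theorem \ref{firstmainthm}, applied to the chain $\{X_\alpha : \alpha < \lambda\}$ inside $X = \bigcup_{\alpha < \lambda} X_\alpha$, delivers $|X| \leq 2^\kappa$. The only genuinely content-bearing step is the observation that free sequences are discrete; everything else is a direct quotation of Lemma \ref{lemchar}, a trivial cardinality-to-Lindel\"of-number estimate, and the invocation of the main theorem. I expect no real obstacle beyond keeping careful track that each closure and each invariant is evaluated in the relevant subspace $X_\alpha$ rather than in the ambient union $X$.
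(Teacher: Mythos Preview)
Your argument is correct and is exactly the derivation the paper intends: the corollary is stated without proof immediately after Lemma~\ref{lemchar} and Theorem~\ref{firstmainthm}, and your verification that the hypothesis $|\overline{D}|\le\kappa$ forces $\psi(X_\alpha)\le\kappa$ (via the lemma) and $F(X_\alpha),\,FL(X_\alpha)\le\kappa$ (via the observation that free sequences are discrete) is precisely what is needed to invoke the theorem.
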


\section{A reflection theorem for hereditarily normal spaces}

In \cite{D}, Dow asked whether compact separable spaces reflect cardinality. It is at present unknown even the following special case. Suppose that in some compact space $X$, the closure of every discrete set has size bounded by the continuum. Is then $|X| \leq \mathfrak{c}$? We are going to prove that this is the case if $X$ is hereditarily normal. As a matter of fact, the only feature of compactness that we need is the fact that pseudocharacter equals character at every point, and separability can be relaxed to the ccc. 

A \emph{cellular family} is a family of pairwise disjoint non-empty open sets. The \emph{cellularity} of $X$ is defined as follows: $c(X)=\sup \{|\mathcal{U}|: \mathcal{U}$ is a cellular family in $X \}$. Recall that a $\pi$-base in a topological space $X$ is a set $\mathcal{P}$ of non-empty open sets such that for every open set $U \subset X$ there is $P \in \mathcal{P}$ with $P \subset U$. The \emph{$\pi$-weight} of $X$ ($\pi w(X)$) is defined as the minimum cardinality of a $\pi$-base for $X$.

 Given a cardinal $\mu$, the logarithm of $\mu$ is defined as follows $\log(\mu)=\min \{\kappa: 2^\kappa \geq \mu \}$. We need a well-known, often used and easily proven lemma of Shapirovskii.

\begin{lemma}
(Shapirovskii) Let $X$ be a space and $\mathcal{U}$ be a cover of $X$. Then there is a discrete set $D \subset X$ and a subcollection $\mathcal{V} \subset \mathcal{U}$ such that $|D|=|\mathcal{V}|$ and $X=\overline{D} \cup \bigcup \mathcal{V}$.
\end{lemma}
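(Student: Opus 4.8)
The plan is to build the discrete set $D$ and the subfamily $\mathcal{V}$ simultaneously by a transfinite recursion, in the same spirit as the construction in Lemma \ref{shapfree}, but now without any cardinal bound and aiming only at discreteness rather than at the full free-sequence property. At a generic stage $\alpha$ I will have produced points $D_\alpha = \{x_\beta : \beta < \alpha\}$ and a subcollection $\mathcal{V}_\alpha = \{U_\beta : \beta < \alpha\} \subseteq \mathcal{U}$. If $X = \overline{D_\alpha} \cup \bigcup \mathcal{V}_\alpha$ I stop; otherwise I choose a point $x_\alpha \in X \setminus (\overline{D_\alpha} \cup \bigcup \mathcal{V}_\alpha)$ and, using that $\mathcal{U}$ is a cover, I pick some $U_\alpha \in \mathcal{U}$ with $x_\alpha \in U_\alpha$.

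Next I would settle termination and the covering conclusion. Since each $x_\alpha$ is chosen outside $\overline{D_\alpha} \supseteq D_\alpha$, the points are pairwise distinct, so the recursion cannot run past $|X|^+$ many stages; let $\mu$ be the first stage at which it halts. By the stopping rule, $X = \overline{D} \cup \bigcup \mathcal{V}$, where $D = D_\mu$ and $\mathcal{V} = \mathcal{V}_\mu$. Moreover, since $x_\alpha \notin \bigcup \mathcal{V}_\alpha$ while $x_\alpha \in U_\alpha$, each $U_\alpha$ differs from every earlier $U_\beta$, so the $U_\alpha$ are distinct and $|\mathcal{V}| = \mu = |D|$, as required.

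The heart of the argument, and the step I would treat most carefully, is the discreteness of $D$. Fix $\alpha < \mu$ and consider the open set $O_\alpha := U_\alpha \cap (X \setminus \overline{D_\alpha})$, which contains $x_\alpha$. For $\beta < \alpha$ we have $x_\beta \in \overline{D_\alpha}$, so $x_\beta \notin O_\alpha$; and for $\gamma > \alpha$ the choice $x_\gamma \notin \bigcup \mathcal{V}_\gamma \supseteq U_\alpha$ gives $x_\gamma \notin O_\alpha$. Hence $O_\alpha \cap D = \{x_\alpha\}$, so $x_\alpha$ is isolated in $D$ and $D$ is discrete. The only subtlety is to notice that the \emph{single} set $U_\alpha$ simultaneously excludes all later points: this works precisely because every point chosen after stage $\alpha$ is required to avoid $\bigcup \mathcal{V}$ at its own stage, a union that already contains $U_\alpha$. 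This is exactly the mechanism that forced the tail of the free sequence to avoid $\bigcup \mathcal{U}_\beta$ in the proof of Lemma \ref{shapfree}, and recognizing it is what makes the present lemma, despite its usefulness, genuinely easy.
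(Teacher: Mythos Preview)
Your argument is correct. The paper does not actually supply a proof of this lemma---it merely calls it ``well-known, often used and easily proven'' and states it without proof---so there is nothing to compare against directly; your construction is precisely the standard one and is the discrete-set analogue of the free-sequence construction the paper does write out in Lemma~\ref{shapfree}. One trivial notational slip: you write $|\mathcal{V}| = \mu = |D|$, but $\mu$ is the length of the recursion, an ordinal that need not be a cardinal; the correct statement is $|\mathcal{V}| = |\mu| = |D|$, which is what your distinctness observations actually establish.
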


\begin{theorem}
Let $X$ be a hereditarily normal space such that $\psi(x,X)=\chi(x,X)$ for every point $x \in X$ and $|\overline{D}| \leq 2^{c(X)}$ for every discrete set $D \subset X$. Then $|X| \leq 2^{c(X)}$
\end{theorem}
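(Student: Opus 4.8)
The plan is to run an elementary submodel argument in the spirit of Theorems \ref{basethm} and \ref{firstmainthm}, with Shapirovskii's Lemma playing the role that Lemma \ref{shapfree} played there, and with hereditary normality supplying the control on the size of the discrete sets that replaces the Lindel\"of-type control available before. Write $\kappa = c(X)$; the target is to produce an elementary submodel of size $2^\kappa$ that swallows all of $X$.

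First I would record the two consequences of the hypotheses that feed the machine. Since $|\overline{D}| \leq 2^\kappa$ for every discrete $D \subset X$, Lemma \ref{lemchar} gives $\psi(X) \leq 2^\kappa$, and the assumption $\psi(x,X)=\chi(x,X)$ upgrades this to $\chi(X) \leq 2^\kappa$; this is exactly the surrogate for compactness, where $\psi = \chi$ holds automatically. Also, from $|D| \leq |\overline{D}| \leq 2^\kappa$ one gets $s(X) \leq 2^\kappa$. Next I fix a large regular $\theta$ and an elementary submodel $M \prec H(\theta)$ with $X, \tau, \kappa \in M$, $2^\kappa \subseteq M$, $[M]^{\leq \kappa} \subseteq M$ and $|M| = 2^\kappa$ (possible since $(2^\kappa)^\kappa = 2^\kappa$). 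As in Claim 1 of Theorem \ref{firstmainthm}, the hypothesis $|\overline{D}| \leq 2^\kappa$ makes closures of discrete subsets of $M$ land inside $M$: if $D \in M$ is discrete then $\overline{D} \in M$ has size $\leq 2^\kappa$, and $2^\kappa \subseteq M$ forces $\overline{D} \subseteq M$. The whole proof is then devoted to showing $X \subseteq M$, which yields $|X| \leq |M| = 2^{c(X)}$ immediately.

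So suppose toward a contradiction that $p \in X \setminus M$. Using $\psi(x,X) = \chi(x,X) \leq 2^\kappa$ at each $x \in X \cap M$, I would fix a local base $\mathcal{B}_x \in M$ of size $\leq 2^\kappa$ (hence $\mathcal{B}_x \subseteq M$); since $X$ is normal, the disjoint closed sets $\{x\}$ and $\{p\}$ have neighbourhoods with disjoint closures, so some $U \in \mathcal{B}_x \subseteq M$ satisfies $x \in U$ and $p \notin \overline{U}$. Thus $\mathcal{U} = \{U \in \tau \cap M : p \notin \overline{U}\}$ is an open cover of $X \cap M$, all of whose members avoid $p$ even in closure. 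Applying Shapirovskii's Lemma to this cover produces a discrete set $D$ and a subcollection $\mathcal{V} \subseteq \mathcal{U}$ with $|\mathcal{V}| = |D|$ and $X \cap M \subseteq \overline{D} \cup \bigcup \mathcal{V}$, where $p$ lies in neither piece.

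The hard part is keeping this subcover small enough to be reflected by $M$: the elementarity step needs $\mathcal{V} \in M$, and for that I need $|\mathcal{V}| = |D| \leq \kappa = c(X)$, whereas Shapirovskii's Lemma together with $s(X) \leq 2^\kappa$ only gives $|D| \leq 2^\kappa$ a priori. This is precisely where hereditary normality must enter in an essential way: the aim is to show that the discrete set produced in this construction can be expanded to a pairwise disjoint family of open sets, forcing $|D| \leq c(X) = \kappa$ — a construction-specific form of the hereditariness of cellularity, made available here by the presence of the separated point $p$ and the equality $\psi = \chi$, rather than by a blanket identity $s(X) = c(X)$. This is the step that, in the purely Hausdorff setting, substitutes for the compactification argument by which Arhangel'skii and Buzyakova bounded $L(X)$ by $F(X) \cdot FL(X)$, and I expect it to be the main obstacle. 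Granting it, I would carry out the recursion exactly as in the final paragraph of Theorem \ref{firstmainthm}, with Shapirovskii's Lemma in place of Lemma \ref{shapfree}: at stage $\beta$ the set $\{x_\alpha : \alpha < \beta\}$ is discrete with bounded closure, so it lies in $M$, one covers $\overline{\{x_\alpha : \alpha < \beta\}}$ by a $\kappa$-sized $\mathcal{V}_\beta \in M$, and the recursion cannot reach length $\kappa^+$ since that would yield a discrete set expandable to a cellular family of size $\kappa^+$, contradicting $c(X) = \kappa$. One ends with $\mathcal{V} \in [\mathcal{U}]^{\leq \kappa} \cap M$ (using $\kappa$-closedness) and a discrete $D \in M$ with $\overline{D} \subseteq M$, so that $X \cap M \subseteq \overline{D} \cup \bigcup \mathcal{V}$; reflecting the statement ``$\exists q \in X \setminus (\overline{D} \cup \bigcup \mathcal{V})$'' through $M$ shows $X \subseteq \overline{D} \cup \bigcup \mathcal{V}$, whence $p$ lies in $\overline{D}$ or in some $V \in \mathcal{V}$ — contradicting $p \notin \overline{D} \cup \bigcup \mathcal{V}$ and completing the proof.
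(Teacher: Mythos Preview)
Your overall architecture is right --- an elementary submodel $M$ of size $2^{c(X)}$, Shapirovskii's Lemma to produce a discrete $D$ and a small subcover, and the observation that $\overline{D}\subseteq M$ once $D\in M$ --- and you have correctly located the crux: forcing the discrete set $D$ to be small enough that $D\in M$. But the mechanism you propose for this step is not the one the paper uses, and as stated it does not work.

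You suggest that hereditary normality should let you expand $D$ to a cellular family and hence get $|D|\le c(X)$. Hereditary normality does not give that: spread can exceed cellularity in $T_5$ spaces (hereditary cellularity equals spread, not cellularity), and nothing about the particular $D$ produced by Shapirovskii's Lemma, or about the separated point $p$, supplies a pairwise disjoint open expansion. So the ``expand to a cellular family'' idea is a genuine gap, not just a detail to be filled in.

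The paper's route through this obstacle is quite different and brings in three ingredients you do not mention. First, it shows $X\cap M$ is dense in $X$ (a short cellularity argument) and that points of $X\cap M$ have character $\le 2^{c(X)}$; together these give $\pi w(X)\le 2^{c(X)}$. Second, Shapirovskii's inequality $\rho(X)\le \pi w(X)^{c(X)}$ then bounds the number of regular open sets by $2^{c(X)}$. Third --- and this is where hereditary normality really enters --- Jones' Lemma yields $2^{|D|}\le \rho(X)$ for any discrete $D$ in a hereditarily normal space. Combining, $2^{|D|}\le 2^{c(X)}$, i.e.\ $|D|<\log\bigl((2^{c(X)})^{+}\bigr)$. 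This is why the paper chooses $M$ to be ${<}\kappa$-closed for $\kappa=\log\bigl((2^{c(X)})^{+}\bigr)$ rather than merely $c(X)$-closed as you do: one only gets $|D|<\kappa$, not $|D|\le c(X)$, and the model must be closed enough to absorb that. With $D\in M$ in hand, a single application of Shapirovskii's Lemma (no further recursion is needed) and elementarity finish exactly as you outline.
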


\begin{proof}
Set $\kappa=\log{(2^{c(X)})^+}$. Let $M$ be a $<\kappa$-closed elementary submodel of $H(\theta)$, for large enough regular $\theta$ such that $|M|=2^{c(X)}$ and $M$ contains everything we need.

\noindent {\bf Claim 1.}
For every $x \in X \cap M$ we have $\chi(x,X) \leq 2^\kappa$.

\begin{proof}[Proof of Claim 1]
Fix $x \in X \cap M$. Subclaim: \emph{for every $p \in X \setminus M$ we can find an open $U \in M$ such that $x \in U$ and $p \notin U$}. If that were true, then we could find a family $\mathcal{S}$ of open neighbourhoods of $x$ such that $|\mathcal{S}| \leq 2^\kappa$ and $\bigcap \mathcal{S} \subset X \cap M$. Now $|X \cap M| \leq 2^\kappa$, so $x$ would have pseudocharacter $2^\kappa$ in $X$, and since pseudocharacter and character in $X$ we would be done. To prove the subclaim, let $\mathcal{U}$ be the set of all open sets $U \subset X$ such that $x \notin \overline{U}$. Then $\mathcal{U} \in M$ and $\mathcal{U}$ covers $X \setminus \{x\}$. By Shapirovskii's lemma we can find a subcollection $\mathcal{W} \subset \mathcal{U}$ and a discrete set $D \subset X \setminus \{x\}$ such that $\mathcal{W} \in M$, $D \in M$ $|\mathcal{W}|=|D| \leq 2^\kappa$ and $X \setminus \{x\} \subset \overline{D} \cup \bigcup \mathcal{W}$. Observe that $\overline{D} \in M$ and $|\overline{D}| \leq 2^\kappa$ and hence $\overline{D} \subset X \cap M$. Therefore $p \notin \overline{D}$ and hence there is $W \in \mathcal{W}$ such that $p \in W$. Now $\overline{W} \in M$ and $x \notin \overline{W}$ therefore $X \setminus \overline{W} \in M$ is a neighbourhood of $x$ which misses $p$.
\renewcommand{\qedsymbol}{$\triangle$}
\end{proof}

\noindent {\bf Claim 2.} The set $X \cap M$ is dense in $X$.

\begin{proof}[Proof of Claim 2]
Suppose that is not the case. Then there is an open set $V \subset X$ such that $\overline{V} \cap X \cap M =\emptyset$. Let now $x \in X \cap M$ and choose an open set $U_0 \in M$ such that $U_0 \cap \overline{V}=\emptyset$. Suppose we have constructed, for some $\beta \in \kappa^+$ a cellular family $\{U_\alpha: \alpha < \beta \} \subset M$ such that $U_\alpha \cap \overline{V}=\emptyset$ for every $\alpha < \beta$. Then $X \setminus \overline{\bigcup_{\alpha < \beta} U_\alpha} \in M$ and given $y \in X \setminus \overline{\bigcup_{\alpha < \beta} U_\alpha} \cap M$ we can find an open set neighbourhood $U_\beta$ of $y$ such that $U_\beta \cap \overline{V}=\emptyset$. Now replace $U_\beta$ with its intersection with $X \setminus \overline{\bigcup_{\alpha < \beta} U_\alpha}$, which is still in $M$ as the intersection of two elements of $M$. Eventually, $\{U_\alpha: \alpha \in \kappa^+\}$ would be a $\kappa^+$-sized cellular family in $X$, which is a contradiction.
\renewcommand{\qedsymbol}{$\triangle$}
\end{proof}

Putting together Claim 1 and Claim 2 we get that $\pi w(X) \leq 2^\kappa$.

 We now claim that $X \subset M$. Indeed, suppose that this is not the case and let $p \in X \setminus M$.

\noindent {\bf Claim 3.} For every $x \in X \cap M$, there is an open set $V \in M$ such that $x \in V$ and $p \notin V$. 

\begin{proof}[Proof of Claim 3] Fix $x \in X \cap M$ and let $\mathcal{U}=\{V \in M: x \notin \overline{V} \}$. The set $\mathcal{U}$ covers $X \setminus \{x\}$. Use Shapirovskii's Lemma to find a discrete set $D \subset X \cap M$ such that $X \setminus \{x\} \subset \overline{D} \cup \bigcup \{U_x: x \in D \}$. By Shapirovskii's bound for the number of regular open sets (see \cite{J} or \cite{Mo} or \cite{BS} for a \emph{game-theoretic} proof) we have that $\rho(X) \leq \pi w(X)^{c(X)} \leq (2^\kappa)^\kappa=2^\kappa$. Moreover, since by Jones Lemma $\rho(X) \geq 2^{|D|}$ in every hereditarily normal space $X$, we must have $|D| < \kappa$ and hence $D \in M$. Therefore $\overline{D} \in M$. From $|\overline{D}| \leq 2^{c(X)}$ we get that $\overline{D} \subset X \cap M$ and thus $p \notin \overline{D}$. This implies that there is $x \in D$ such that $p \in U_x$. By letting $V=X \setminus \overline{U_x}$ we get that $V$ is a neighbourhood of $x$ such that $V \in M$ and $p\notin V$.
\renewcommand{\qedsymbol}{$\triangle$}
\end{proof}

If we now let $\mathcal{V}=\{U \in M: p \notin U \}$, we see that $\mathcal{V}$ is an open cover of $X \cap M$. Using Shapirovskii's Lemma again, we obtain the existence of a discrete set $E \subset X \cap M$ such that $X \cap M \subset \overline{E} \cup \bigcup \{U_x: x \in E \}$, where $U_x \in \mathcal{V}$ and $x \in U_x$. By the same reasoning as in the proof of the Claim we have that $\overline{E} \subset X \cap M$. The closure property of $M$ implies that $\overline{E} \cup \bigcup \{U_x: x \in E\} \in M$  and hence $M \models X \subset \overline{E} \cup \bigcup \{U_x: x \in E \}$. By elementarity, we get that $H(\theta) \models X \subset \overline{E} \cup \bigcup \{U_x: x \in E \}$ and therefore there is $x \in E$ such that $p \in U_x$, but that contradicts the definition of $\mathcal{V}$.

Therefore $X \subset M$ and we are done.
\end{proof}

Recall the definition of the \emph{depth} of $X$: $g(X)=\{|\overline{D}|: D \subset X$ discrete$\}$.

\begin{corollary} \label{maincor}
Let $X$ be a compact hereditarily normal ccc space such that $g(X) \leq \mathfrak{c}$. Then $|X| \leq \mathfrak{c}$,
\end{corollary}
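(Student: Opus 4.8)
The plan is to derive Corollary \ref{maincor} from the preceding theorem by verifying its three hypotheses for a compact, hereditarily normal, ccc space $X$ with $g(X) \leq \mathfrak{c}$. The theorem concludes $|X| \leq 2^{c(X)}$, so the strategy is to show that each hypothesis holds and that $2^{c(X)}$ collapses to $\mathfrak{c}$ under the ccc assumption.

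First I would observe that $X$ is hereditarily normal by assumption, so that hypothesis is immediate. Next, since $X$ is ccc, every cellular family is countable, so $c(X) = \omega$ and therefore $2^{c(X)} = 2^\omega = \mathfrak{c}$. This is the key reduction: the bound $|X| \leq 2^{c(X)}$ delivered by the theorem becomes exactly $|X| \leq \mathfrak{c}$, which is the desired conclusion. It also lets me rewrite the discrete-closure hypothesis: the assumption $g(X) \leq \mathfrak{c}$ says precisely that $|\overline{D}| \leq \mathfrak{c} = 2^{c(X)}$ for every discrete $D \subset X$, matching the theorem's requirement exactly.

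The remaining hypothesis to check is that $\psi(x,X) = \chi(x,X)$ for every $x \in X$. Here is where I would invoke compactness: in a compact Hausdorff space, pseudocharacter and character agree at every point. I would recall the standard argument — given a point $x$ with a family $\mathcal{U}$ of open neighbourhoods whose intersection is $\{x\}$, compactness of the closed sets $\overline{X \setminus U}$ (for $U \in \mathcal{U}$) together with the fact that their intersection is empty lets one extract from finite intersections a neighbourhood base at $x$ of the same cardinality, so $\chi(x,X) \leq \psi(x,X)$, and the reverse inequality is trivial. This is the one place where compactness is genuinely used, consistent with the remark at the start of the section that $\psi = \chi$ pointwise is the only feature of compactness actually needed.

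With all three hypotheses verified, the theorem applies and yields $|X| \leq 2^{c(X)} = \mathfrak{c}$, completing the proof. I do not anticipate a serious obstacle, since this is a routine specialization; the only point requiring care is confirming the equality $\psi = \chi$ from compactness, but this is a classical fact. One could alternatively note that separability (mentioned in the section's opening as relaxable to ccc) would give the same cellularity bound, but since the corollary already assumes ccc directly, the cellularity computation $c(X) = \omega$ is the cleanest route.
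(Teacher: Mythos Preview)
Your proposal is correct and matches the paper's intended derivation: the corollary is stated without a separate proof precisely because it follows immediately from the preceding theorem once one notes that compactness gives $\psi(x,X)=\chi(x,X)$ at every point, ccc gives $c(X)=\omega$, and $g(X)\leq\mathfrak{c}=2^{c(X)}$ supplies the discrete-closure bound. The paper even remarks explicitly that pseudocharacter equalling character is the only feature of compactness used, which is exactly what you verify.
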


Note that there are consistent examples of compact hereditarily normal hereditarily separable spaces of cardinality $2^\mathfrak{c}$ (for example, Fedorchuk's compact $S$-space), and this shows that the condition about the depth is essential in Corollary $\ref{maincor}$.

\section{Acknowledgements}

We thank the referee for correcting an error in the proof of Theorem $\ref{firstmainthm}$, INdAM for financial support and the Fields Institute of the University of Toronto for hospitality.

\end{document}